\def\E{\mathbb{E}}
\title[On offset Hamilton cycles in random hypergraphs]{On offset Hamilton cycles in random hypergraphs}
\author{Andrzej Dudek and Laars Helenius}
\address[Andrzej Dudek and Laars Helenius]{Department of Mathematics, Western Michigan University, Kalamazoo, MI}
\email{\tt \{andrzej.dudek,\;laars.c.helenius\}@wmich.edu}
\thanks{The first author was supported in part by Simons Foundation Grant \#244712 and by the National Security Agency under Grant Number H98230-15-1-0172. The United States Government is authorized to reproduce and distribute reprints notwithstanding any copyright notation hereon.}
\newtheoremstyle{plain}%
{8pt plus2pt minus4pt}%
{8pt plus2pt minus4pt}%
{\itshape}%
{}%
{\bfseries\scshape}%
{}%
{6pt}
{}%
\newtheoremstyle{remark}%
{8pt plus2pt minus4pt}%
{8pt plus2pt minus4pt}%
{\upshape}
{}%
{\bfseries\scshape}%
{}%
{6pt}
{}%
\theoremstyle{plain}
\newtheorem{theorem}{Theorem}
\newtheorem{observation}{Observation}
\theoremstyle{remark}
\begin{document} 

\begin{abstract}
An {\em $\ell$-offset Hamilton cycle} $C$ in a $k$-uniform hypergraph $H$ on~$n$ vertices is a collection of edges of $H$ such that for some cyclic order of $[n]$ every pair of consecutive edges $E_{i-1},E_i$ in $C$ (in the natural ordering of the edges) satisfies $|E_{i-1}\cap E_i|=\ell$ and every pair of consecutive edges $E_{i},E_{i+1}$ in $C$ satisfies $|E_{i}\cap E_{i+1}|=k-\ell$. We show that in general $\sqrt{e^{k}\ell!(k-\ell)!/n^k}$ is the sharp threshold for the existence of the $\ell$-offset Hamilton cycle in the random $k$-uniform hypergraph $H_{n,p}^{(k)}$. We also examine this structure's natural connection to the 1-2-3 Conjecture.
\end{abstract}

\maketitle

\section{Introduction}

A $k$-uniform hypergraph is a hypergraph in which each edge contains exactly $k$ vertices. The random $k$-uniform hypergraph, denoted $H_{n,p}^{(k)}$, has each possible edge appearing independently with probability $p$. Observe that $H_{n,p}^{(2)}$ is equivalent to the binomial random graph $G_{n,p}$. 

The threshold for the existence of Hamilton cycles in the random graph $G_{n,p}$ has been known for many years, see, e.g., \cite{AKS}, \cite{Boll} and \cite{KS}. There have been many generalizations of these results over the years and the problem is well understood. Quite recently some of these results were extended to hypergraphs.

Suppose that $1\leq \ell< k$. An {\em $\ell$-overlapping Hamilton cycle} $C$
in a $k$-uniform hypergraph $H=(V,\mathcal{E})$ on $n$ vertices is a
collection of $m_\ell=n/(k-\ell)$ edges of $H$ such that for some cyclic order
of $[n]$ every edge consists of $k$ consecutive vertices and for
every pair of consecutive edges $E_{i-1},E_i$ in $C$ (in the natural
ordering of the edges) we have $|E_{i-1}\cap E_i|=\ell$. Thus, in every
$\ell$-overlapping Hamilton cycle the sets $C_i=E_i\setminus
E_{i-1},\,i=1,2,\ldots,m_\ell$, are a partition of $V$ into sets of
size $k-\ell$. Hence, $m_{\ell}=n/(k-\ell)$. Thus, $k-\ell$ divides $n$.
In the literature, when $\ell=k-1$ we have a {\em tight} Hamilton cycle and
when $\ell=1$ we have a {\em loose} Hamilton cycle.

A $k$-uniform hypergraph is said to be {\em $\ell$-Hamiltonian} when it contains an $\ell$-overlapping Hamilton cycle.
Recently, results on loose hamiltonicity of $H_{n,p}^{(k)}$ were
obtained by Frieze~\cite{F} (for $k=3$), Dudek and Frieze~\cite{DF} (for $k \ge 4$ and $2(k-1)|n$),
and by Dudek, Frieze, Loh and Speiss~\cite{DFLS} (for $k \ge 3$ and $(k-1)|n$).

Throughout this paper the following conventions are adhered to: we let $\omega=\omega(n)$ be any function tending to infinity with $n$, we let $e$ be the base of natural logarithm $\log n$, and we do not round numbers that are supposed to be integers either up or down. This last convention is justified since the rounding errors introduced are negligible for the asymptomatic calculations we make.

\begin{theorem}[\cite{F, DF,DFLS}]\label{thm:loose}
There exists an absolute constant $c>0$ such that if $p\geq c(\log n)/n^2$, then a.a.s. $H_{n,p}^{(3)}$ contains a loose Hamilton cycle provided that $2|n$. Furthermore, for every $k \ge 4$ if $p \ge \omega(\log n)/n^{k-1}$, then $H_{n,p}^{(k)}$ contains a loose Hamilton cycle provided that $(k-1)|n$.
\end{theorem}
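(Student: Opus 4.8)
Set $m=n/(k-1)$; this is the number of edges of a loose Hamilton cycle. Recall the skeleton of such a cycle: for a suitable cyclic ordering of $[n]$ the edges occupy the windows of $k$ consecutive vertices that begin at the positions congruent to $1$ modulo $k-1$, so the $m$ vertices in those start positions --- the \emph{connectors} --- have degree $2$, while the other $n-m$ vertices split into $m$ \emph{private blocks} of size $k-2$, each block lying strictly inside a single edge. Equivalently, a loose Hamilton cycle amounts to a choice of which $m$ vertices are connectors together with a cyclic order $c_1,\dots,c_m$ of them, and a partition of the remaining vertices into $(k-2)$-blocks $P_1,\dots,P_m$ assigned bijectively to the arcs, such that every $\{c_j\}\cup P_j\cup\{c_{j+1}\}$ is an edge of $H^{(k)}_{n,p}$. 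Observe that a fixed pair of connectors has, in expectation, only $\Theta(\log n/n)=o(1)$ completions to an edge by a $(k-2)$-block, so the object is rigid: neither the connector order nor the arc fillings can be prescribed in advance, which is the root of the difficulty.

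The plan is a multi-round construction. Split $H^{(k)}_{n,p}\supseteq H_1\cup\dots\cup H_t$ into a bounded number of independent copies of $H^{(k)}_{n,p'}$ with $p'=\Theta(p)$ still above the stated threshold, so that each round may be treated as a fresh random hypergraph of essentially the same density; note that a.a.s. every vertex lies in $\Theta(\log n)$ edges of each round, and this Chernoff estimate is exactly where an \emph{arbitrarily large} absolute constant is needed when $k=3$, whereas for $k\ge4$ the extra $\omega$ factor gives it for free. In the first round I would build an almost-spanning system of vertex-disjoint loose paths --- covering all but $O(1)$ vertices and consisting of $O(1)$ paths --- inside $H_1$. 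Up to the bounded defect, this is of the same difficulty as a perfect matching in an auxiliary random $k$-uniform hypergraph of density $\Theta(\log n/n^{k-1})$, governed by exactly the same coupon-collector obstruction (an uncovered vertex lying in no available edge) as the loose Hamilton cycle itself, and it follows from a Johansson--Kahn--Vu type perfect-matching threshold; when $k\ge4$ the $\omega$ slack lets one use instead a softer second-moment or nibble argument, while for $k=3$ one is sitting right at the order of the true threshold and the full strength (or a rotation--extension substitute, available because only a large constant is asked for) is required.

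In the remaining rounds I would stitch the $O(1)$ loose paths into a single loose Hamilton cycle and absorb the $O(1)$ leftover vertices. To merge two loose paths with endpoint connectors $x,y$ it suffices to find one edge of the next fresh round through $x$, through $k-2$ not-yet-used vertices (or through an endpoint of a third short path if no unused vertices remain), and through $y$; since each vertex lies in $\Theta(\log n)$ edges of that round and only $O(1)$ such merges and absorptions are performed, a union bound over the $O(1)$ choices at each step shows all the required edges are present a.a.s. The hypothesis $(k-1)\mid n$ enters only to make the vertex count compatible with an exact loose Hamilton cycle rather than one with a bounded remainder; the slack built into the number of paths together with the reserved randomness lets the $O(1)$ discrepancy be absorbed, and it is precisely this absorption step that lets \cite{DFLS} replace the cruder condition $2(k-1)\mid n$ of \cite{DF} by $(k-1)\mid n$.

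The main obstacle is the first round. At density on the order of the true threshold the structure is rigid --- as noted, a given pair of connectors is almost never completable --- and the straightforward second-moment computation for the number of loose Hamilton cycles is overwhelmed by the positive correlation stemming from vertices that lie in atypically few edges; resolving this is exactly the content of the perfect-matching threshold that must be invoked (or reproved) for the auxiliary hypergraph. Everything else --- the bounded-round exposure, the path merging, and the absorption of the bounded remainder --- is routine once enough independent randomness has been set aside.
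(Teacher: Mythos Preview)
The present paper does not prove this theorem: Theorem~\ref{thm:loose} is quoted from \cite{F,DF,DFLS} purely as background, with no argument supplied here. There is nothing in the paper to compare your sketch against.

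For what it is worth, your outline is also not a faithful account of those references. You describe a multi-round scheme --- build an almost-spanning system of $O(1)$ loose paths via a JKV-type matching result in round one, then stitch and absorb the $O(1)$ leftovers in later rounds --- and you attribute the divisibility improvement of \cite{DFLS} to a refined absorption step. The actual proofs are architected differently: one sets up an auxiliary random hypergraph, engineered so that a perfect matching in it \emph{is} (or directly yields) a loose Hamilton cycle in $H^{(k)}_{n,p}$, and then applies the Johansson--Kahn--Vu theorem to that auxiliary object in one shot; there is no path-building phase followed by merging. The passage from $2(k-1)\mid n$ in \cite{DF} to $(k-1)\mid n$ in \cite{DFLS} is obtained by a more careful auxiliary construction, not by an absorption mechanism. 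You have correctly identified the decisive ingredient (JKV at the coupon-collector threshold), but the surrounding architecture you propose is not the one those papers use.
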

\noindent
These results are basically optimal since if $p\leq (1-\varepsilon) (k-1)! (\log n) / n^{k-1}$ and $\varepsilon>0$ is constant, then a.a.s. $H_{n,p}^{(k)}$ contains isolated vertices. More recently Ferber~\cite{Feb} simplified some of the proofs of Theorem~\ref{thm:loose} and Dudek and Frieze~\cite{DF2} were able to extend these to an arbitrary $\ell\ge 2$.
\begin{theorem}[\cite{DF2}]\label{thm:tight}
\ \\[-0.1in]
\begin{enumerate}[(i)]
\item For all integers $k> \ell\geq 2$ and fixed $\varepsilon>0$, if
$p\leq (1-\varepsilon)e^{k-\ell}/n^{k-\ell}$, then
a.a.s.\ $H_{n,p}^{(k)}$ is not $\ell$-Hamiltonian.
\item For all integers $k>\ell \ge 3$, there exists a
constant $c=c(k)$ such that if $p\geq c/n^{k-\ell}$
and $n$ is a multiple of $k-\ell$, then a.a.s.
 $H_{n,p}^{(k)}$ is $\ell$-Hamiltonian. \label{thm:a}
\item If $k>\ell=2$ and $p\geq \omega/n^{k-2}$
and $n$ is a multiple of $k-2$,
then a.a.s. $H_{n,p}^{(k)}$ is $2$-Hamiltonian.\label{thm:b}
\item For a fixed $\varepsilon>0$, if $k\geq 4$ and $p\geq (1+\varepsilon)e/n$, then
a.a.s.\ $H_{n,p}^{(k)}$ contains a tight Hamilton cycle.
\label{thm:c}
\end{enumerate}
\end{theorem}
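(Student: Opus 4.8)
\emph{Part (i)} asks for a non-existence statement, and I would get it by the first moment method alone. Let $X$ be the number of $\ell$-overlapping Hamilton cycles in $H_{n,p}^{(k)}$. Every such cycle is determined by a cyclic ordering of $[n]$, up to reflection and the fixed internal symmetries of the edge pattern, so the number $N$ of potential cycles is at most $n!$, and each uses exactly $m_\ell=n/(k-\ell)$ edges; hence $\E[X]=Np^{m_\ell}\le n!\,p^{m_\ell}$. Plugging in $p\le(1-\varepsilon)e^{k-\ell}/n^{k-\ell}$ gives $\E[X]\le n!\,(1-\varepsilon)^{n/(k-\ell)}e^{n}/n^{n}$, and Stirling's formula reduces the right side to $O(\sqrt n)\,(1-\varepsilon)^{n/(k-\ell)}\to0$. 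Markov's inequality then yields $X=0$ a.a.s.

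\emph{Parts (ii)--(iv)} I would prove by the second moment method applied to the same $X$. One first checks $\E[X]\to\infty$. For tight cycles ($\ell=k-1$, $m_\ell=n$) we have $N=(n-1)!/2$ and $\E[X]=\tfrac{(n-1)!}{2}p^{n}$, which by Stirling tends to infinity exactly when $p>e/n$, matching part (i); for general $\ell$ the count $N$ is $(n-1)!/2$ further divided by a product of bounded per-edge symmetry factors — an $e^{\Theta(n)}$ reduction — so there $\E[X]\to\infty$ requires only that the leading constant of $p$ exceed a (larger) $k$-dependent value, which is already guaranteed in (ii)--(iii). For the second moment, write $\Pr[E(C)\cup E(C')\subseteq H]=p^{2m_\ell-|E(C)\cap E(C')|}$ and use edge-transitivity of the family of potential cycles to freeze one cycle $C$; this gives
\[
\frac{\E[X^2]}{(\E[X])^2}=\frac{1}{N}\sum_{j\ge0}\bigl|\{\,C':|E(C)\cap E(C')|=j\,\}\bigr|\,p^{-j}.
\]
The $j=0$ term equals $1-o(1)$ and dominates, because a fixed $C$ meets only $o(N)$ of the potential cycles, so the whole argument reduces to showing that the tail $\sum_{j\ge1}(\cdots)p^{-j}$ is $o(1)$.

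That tail is the main obstacle. When $C'$ shares $j$ edges with $C$, those edges split into some number $s\le j$ of maximal $\ell$-overlapping subpaths of $C$, and one enumerates: the composition of $j$ into subpath lengths, the placement of the subpaths along $C$, their orientations, and the number of ways to splice the remaining $m_\ell-j$ edges of $C'$ (which must avoid $E(C)$) into a single $\ell$-overlapping cycle through those subpaths. Up to factors polynomial in $n$ and bounded-to-the-$s$, the completion count behaves like $(n-(k-\ell)j)!$ divided by the ambient symmetry, so that after dividing by $N$ and multiplying by $p^{-j}$ the $j$-th term takes the shape $\mathrm{poly}(n)\cdot(D/c_0)^{j}$, where $c_0$ is the leading constant of $p\,n^{k-\ell}$ and $D$ is the combinatorial base produced by the Stirling cancellation ($D=e$ in the tight case). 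The sum over $j$ is then a geometric-type series which is $o(1)$ once $c_0$ is large enough relative to $D$: this is the role of ``$c$ sufficiently large'' in (ii) and, sharply, of the $(1+\varepsilon)$ in (iv), where $c_0=(1+\varepsilon)e>e=D$. For $\ell=2$ the situation is genuinely worse — already the $j=1$ term works out to $\Theta(1/c_0)$ rather than $o(1)$ — so no fixed constant suffices and one must take $c_0=\omega\to\infty$, which is exactly part (iii). Performing this intersection-pattern enumeration with the correct symmetry bookkeeping is the technical heart; the rest is Stirling estimates. Once $\E[X^2]=(1+o(1))(\E[X])^2$, the Paley--Zygmund (Chebyshev) inequality gives $\Pr[X>0]\ge(\E[X])^2/\E[X^2]\to1$, so a.a.s. $H_{n,p}^{(k)}$ is $\ell$-Hamiltonian.
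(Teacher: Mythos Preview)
This theorem is not proved in the present paper; it is quoted from~\cite{DF2}, and the paper only remarks that its own proof of Theorem~\ref{thm:main} ``is based on the second moment method, similar to the proof of Theorem~\ref{thm:tight}.'' So there is no in-paper proof to compare against directly.

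That said, your outline is the correct one and matches the method of~\cite{DF2} (and, in parallel form, the paper's proof of Theorem~\ref{thm:main}): first moment for~(i), second moment for~(ii)--(iv), with the variance controlled by enumerating cycles $C'$ according to the number $j$ of edges shared with a fixed $C$ and the number $s$ of maximal shared subpaths, bounding the completion count by a factorial of the remaining free vertices divided by the appropriate symmetry, and reducing the tail to a geometric-type sum whose ratio is governed by the leading constant of $p\,n^{k-\ell}$. Your diagnosis of the $\ell=2$ case --- that the $j=1$ contribution is only $\Theta(1/c_0)$, forcing $c_0=\omega$ --- is exactly the reason~(iii) is stated with $\omega$ rather than a constant. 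The one place your sketch is slightly loose is the claim that the $j=0$ term already equals $1-o(1)$ because ``a fixed $C$ meets only $o(N)$ of the potential cycles''; in practice one does not separate this out but simply bounds $N(0,0)\le N$ so the $j=0$ term is $\le 1$, and then shows the $j\ge1$ tail is $o(1)$ --- which is how the paper handles the analogous step for offset cycles.
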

\noindent
This theorem shows, in particular, that $e/n$ is the sharp threshold for the existence of a tight Hamilton cycle in a $k$-uniform hypergraph, when
$k\geq 4$.

Finally Poole~\cite{Poole} considered \emph{weak (Berge) Hamiltonian cycles} $C$ in $k$-uniform hypergraphs $H$ on $n$ vertices which are collections of edges of $H$ such that for some cyclic order
of $[n]$ every pair of consecutive vertices belong to an edge from $C$ and these edges are not necessarily distinct. Notice that loose Hamilton cycles are weak Hamiltonian cycles, too. In particular,
\begin{theorem}[\cite{Poole}]\label{thm:berge}
Let $k\ge 3$. Then, $p=(k-1)! (\log n)/ n^{k-1}$ is a sharp threshold for the existence of the weak Hamiltonian cycle in $H_{n,p}^{(k)}$.
\end{theorem}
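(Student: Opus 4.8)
\medskip

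\noindent
The plan is to reduce the statement to a question about ordinary graphs. For a $k$-uniform hypergraph $H$ on $[n]$, write $\Gamma=\partial_2H$ for the graph on $[n]$ whose edges are exactly the pairs of vertices that lie together in some hyperedge of $H$. First I would observe that $H$ contains a weak Hamiltonian cycle if and only if $\Gamma$ contains a Hamilton cycle: a weak Hamiltonian cycle is a cyclic order $v_1,\dots,v_n$ together with (not necessarily distinct) hyperedges $E_1,\dots,E_n$ with $\{v_i,v_{i+1}\}\subseteq E_i$ for all $i$, and then each $v_iv_{i+1}$ is an edge of $\Gamma$, so $v_1\dots v_n$ is a Hamilton cycle of $\Gamma$; conversely a Hamilton cycle $v_1\dots v_n$ of $\Gamma$ comes with hyperedges $E_i\supseteq\{v_i,v_{i+1}\}$ by definition of $\Gamma$, and since these may repeat this is precisely a weak Hamiltonian cycle of $H$. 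With $p^{\ast}=(k-1)!(\log n)/n^{k-1}$, it then suffices to show that $p^{\ast}$ is a sharp threshold for the Hamiltonicity of $\Gamma=\partial_2H_{n,p}^{(k)}$.

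For the $0$-statement I would take $p\le(1-\varepsilon)p^{\ast}$ and let $X$ count the vertices of $H_{n,p}^{(k)}$ lying in no hyperedge; such a vertex is isolated in $\Gamma$, so $\Gamma$ is not Hamiltonian, and it is enough to show $X>0$ a.a.s. A fixed vertex is missed with probability $(1-p)^{\binom{n-1}{k-1}}$, and since $p\binom{n-1}{k-1}=(1-\varepsilon+o(1))\log n$ this gives $\E X=n^{\varepsilon+o(1)}\to\infty$; two fixed vertices are both missed with probability $(1-p)^{2\binom{n-1}{k-1}-\binom{n-2}{k-2}}$, and as $p\binom{n-2}{k-2}=O((\log n)/n)=o(1)$ this equals $(1+o(1))\bigl((1-p)^{\binom{n-1}{k-1}}\bigr)^2$, so $\E[X^2]=(1+o(1))(\E X)^2$ and Chebyshev's inequality finishes it.

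For the $1$-statement I would take $p\ge(1+\varepsilon)p^{\ast}$, so that the edges of $\Gamma$ appear with marginal probability $q:=1-(1-p)^{\binom{n-2}{k-2}}=(1+o(1))(1+\varepsilon)(k-1)(\log n)/n$; thus $\Gamma$ sits just above the scale $(\log n)/n$ of the Hamiltonicity threshold for $G_{n,q}$, with leading constant $(1+\varepsilon)(k-1)\ge2(1+\varepsilon)>2$. Moreover the expected number of hyperedges through a fixed vertex of $H$ is $(1+o(1))(1+\varepsilon)\log n$, and a Chernoff bound shows that a.a.s.\ every vertex lies in at least $\log\log n$ of them, so a.a.s.\ $\delta(\Gamma)\to\infty$. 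I would then run the Pósa rotation--extension argument on $\Gamma$, exactly as it is run for $G_{n,q}$ just above its Hamiltonicity threshold (cf.\ \cite{Boll,KS}): show that a.a.s.\ $\Gamma$ is connected, that for a suitable $R$ every $S\subseteq[n]$ with $|S|\le R$ satisfies $|N_{\Gamma}(S)\setminus S|\ge 2|S|$ (with the smallest sets absorbed by $\delta(\Gamma)\to\infty$ and the absence of dense small configurations, and the larger ones by union bounds), and that any two disjoint $R$-sets are joined by an edge of $\Gamma$; these three facts allow a longest path of $\Gamma$ to be rotated and closed into a spanning cycle.

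The hard part will be verifying these expansion and connectivity properties, because $\Gamma$ is not a binomial random graph: every hyperedge of $H$ is a $k$-clique in $\Gamma$, so $\Gamma$ is locally clustered and the $G_{n,q}$ estimates --- already delicate near the threshold --- cannot simply be quoted but must be redone in the hypergraph model. The point to extract is that this clustering is benign: since each vertex lies in only $\Theta(\log n)$ hyperedges, the hyperedges meeting any fixed small vertex set are a.a.s.\ almost disjoint, so for the purposes of these union bounds $\Gamma$ behaves like $G_{n,q}$. Concretely, the event that a set $S$ fails $|N_{\Gamma}(S)\setminus S|\ge 2|S|$ forces every hyperedge meeting $S$ to lie inside a fixed set of fewer than $3|S|$ vertices, an event governed by $(1-p)$ raised to a power of order $|S|\,n^{k-1}$ (the number of hyperedges joining $S$ to the complement of that set), and one checks --- after the usual reduction to ``connected'' bad configurations and after peeling off the smallest sets using the minimum-degree bound --- that these probabilities are small enough to carry the argument through; the connectivity statement and the two-set statement are treated the same way. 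Combining the reduction with the $0$- and $1$-statements then shows that $p^{\ast}$ is a sharp threshold.
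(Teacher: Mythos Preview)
The paper does not prove Theorem~\ref{thm:berge}; it is quoted in the introduction as a result of Poole~\cite{Poole}, with no argument given. There is therefore nothing in the present paper to compare your proposal against.

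For what it is worth, your outline is a reasonable plan and is in the spirit of Poole's original argument: the reduction to the $2$-shadow graph $\Gamma=\partial_2 H$ is exactly the right move, the $0$-statement via isolated hypergraph vertices is standard and correct, and the $1$-statement does require redoing the P\'osa rotation--extension machinery for the clustered (non-binomial) graph $\Gamma$. Your identification of the genuine difficulty --- that the expansion and connectivity estimates cannot be quoted from $G_{n,q}$ but must be re-established directly in the hypergraph model --- is accurate, and the heuristic you give for why the clustering is benign is the right one, though turning it into a full proof is where the real work lies.
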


In this paper we let  $1\leq \ell\le k/2$ and define an {\em $\ell$-offset Hamilton cycle} $C$ in a $k$-uniform hypergraph $H$ on $n$ vertices as a collection of $m$ edges of $H$ such that for some cyclic order of $[n]$ every pair of consecutive edges $E_{i-1},E_i$ in $C$ (in the natural ordering of the edges) satisfies $|E_{i-1}\cap E_i|=\ell$ and every pair of consecutive edges $E_{i},E_{i+1}$ in $C$ satisfies $|E_{i}\cap E_{i+1}|=k-\ell$~(see Figure~\ref{fig:offsetHcycle}).

Since every $\ell$-offset Hamilton cycle consists of two perfect matching of size $n/k$, we have $m=2n/k$ and we always assume that $k$ divides $n$ when discussing $\ell$-offset Hamilton cycles. A $k$-uniform hypergraph is said to be {\em $\ell$-offset Hamiltonian} when it contains an $\ell$-offset Hamilton cycle. 

\begin{figure}[h]
\begin{tikzpicture}
\foreach \m in {0,1,2}{
\draw [fill=gray!50!white, rounded corners, rotate around={120*\m:(0,0)}] 
    ( 15:2.25) arc ( 15:129:2.25) -- 
    (129:1.75) arc (129: 15:1.75) -- cycle;
}
\foreach \m in {0,1,2}{
\draw [rounded corners, rotate around={120*\m:(0,0)}]
     ( 63:1.65) arc ( 63:177:1.65) --
     (177:2.35) arc (177: 63:2.35) -- cycle;
}
\foreach \n in {1,...,15}{\draw[fill=black] (\n*24:2) circle (0.05);};
    \end{tikzpicture}
\caption{A $2$-offset Hamilton cycle in a $5$-uniform hypergraph.}
\label{fig:offsetHcycle}
\end{figure}
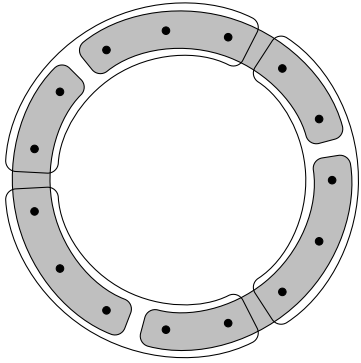

It follows from a result of Parczyk and Person (Corollary 3.1 in~\cite{PP}) that if $p=\omega/n^{k/2}$, then a.a.s. $H_{n,p}^{(k)}$ is $\ell$-offset Hamiltonian for any $k\ge 4$ and $\ell \ge 2$. In the next theorem we replace this asymptotic threshold by the sharp one for $k\ge 6$ and $\ell\ge 3$. 

\newpage

\begin{theorem}\label{thm:main} 
Let $\varepsilon >0$. Then:
\begin{enumerate}[(i)]
\item For all integers $k\ge 3$ and $1\le\ell\le\frac k2$, if $p=(1-\varepsilon)\sqrt{e^{k}\ell!(k-\ell)!/n^{k}}$,
then a.a.s.\ $H_{n,p}^{(k)}$ is not $\ell$-offset Hamiltonian.\label{thm:a}
\item For all integers $k\ge 6$ and $3\le\ell\le\frac k2$, if $p=(1+\varepsilon)\sqrt{e^{k}\ell!(k-\ell)!/n^{k}}$,
then a.a.s.\ $H_{n,p}^{(k)}$ is $\ell$-offset Hamiltonian.\label{thm:b}
\item For all integers $k\ge4$ and $\ell=2$ and if $p=\frac{\omega}{n^{k/2}}$,
then a.a.s.\ $H_{n,p}^{(k)}$ is $2$-offset Hamiltonian.\label{thm:c}
\end{enumerate}
\end{theorem}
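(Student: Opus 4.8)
The plan is to prove the three parts separately: part (i) by a first moment calculation, part (ii) by a second moment calculation, and part (iii) either by the same second moment argument (which now has room to spare) or by quoting~\cite{PP}. Write $m=2n/k$ and let $X$ count the $\ell$-offset Hamilton cycles of $H^{(k)}_{n,p}$, so that $\E X=N\,p^{m}$, where $N$ is the number of $\ell$-offset Hamilton cycles on the labeled vertex set $[n]$. The identity on which all three parts rest is
\[
p^{m}=\bigl((1\pm\varepsilon)^{2}\,e^{k}\,\ell!\,(k-\ell)!\,n^{-k}\bigr)^{n/k}=(1\pm\varepsilon)^{2n/k}\,e^{n}\,\bigl(\ell!\,(k-\ell)!\bigr)^{n/k}\,n^{-n};
\]
in particular the factor $\ell!\,(k-\ell)!$ in the threshold is there precisely to cancel a matching factor in $N$. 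For part (i) I would use the following structure: for some cyclic order $v_1,\dots ,v_n$ of $[n]$, an $\ell$-offset Hamilton cycle is the union of the perfect matching $\{v_1,\dots,v_k\},\{v_{k+1},\dots,v_{2k}\},\dots$ with its cyclic shift by $k-\ell$, so the $m$ intersections of consecutive edges are $m$ disjoint blocks of consecutive vertices whose sizes alternate between $\ell$ and $k-\ell$ and which partition $[n]$. Permuting the vertices inside any one of these blocks does not change the edge set, so at least $(\ell!)^{n/k}\bigl((k-\ell)!\bigr)^{n/k}$ orderings of $[n]$ yield a given $\ell$-offset Hamilton cycle; as distinct offset Hamilton cycles have disjoint sets of realizing orderings, $N\le n!\big/\bigl((\ell!)^{n/k}((k-\ell)!)^{n/k}\bigr)$. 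Plugging this and the identity into $\E X=Np^{m}$ and using Stirling's formula gives $\E X\le(1+o(1))\sqrt{2\pi n}\,(1-\varepsilon)^{2n/k}=o(1)$, and Markov's inequality finishes part (i).

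For part (ii) I would first sharpen the bound on $N$ to an equality up to a polynomial factor: a rigidity argument (any automorphism of the offset Hamilton cycle, regarded as a $k$-uniform hypergraph, permutes the two matchings and respects the cyclic adjacency of its $m$ edges, hence is an internal permutation of the $m$ overlap blocks followed by one of $O(n)$ rotations and reflections) shows $N=n!\big/\bigl(\mathrm{poly}(n)\,(\ell!)^{n/k}((k-\ell)!)^{n/k}\bigr)$, whence with the identity above $\E X=(1+\varepsilon)^{2n/k}\,\Theta(n^{-1/2})\to\infty$. The real content of part (ii) is the second moment bound $\E[X^{2}]=(1+o(1))(\E X)^{2}$, which gives $\Pr(X=0)\to 0$ by Chebyshev's inequality; writing $\E[X^{2}]=\sum_{C_1,C_2}p^{|E(C_1)\cup E(C_2)|}$ and using the symmetry of the double sum, this is equivalent to showing that, for a fixed offset Hamilton cycle $C_1$ and a uniformly random offset Hamilton cycle $C_2$ on $[n]$,
\[
\E_{C_2}\!\left[p^{-|E(C_1)\cap E(C_2)|}\right]=1+o(1),
\]
where the pairs with $E(C_1)\cap E(C_2)=\emptyset$ already account for $1-o(1)$, so that what must be shown is that the shared edges contribute only $o(1)$.

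This is where the work lies. Since two non-consecutive edges of an offset Hamilton cycle are disjoint, any set of shared edges that is a path in $C_1$ is also a path in $C_2$, so $E(C_1)\cap E(C_2)$ is a disjoint union of \emph{segments}, i.e.\ maximal runs of edges consecutive in both cycles. A segment of $j$ edges spans at least $\tfrac{k}{2}j+\ell$ vertices (equality being possible only for even $j$), and this one inequality controls everything: prescribing a segment of $j$ edges spanning $v$ vertices gains the factor $p^{-j}$, of order $n^{kj/2}$; a prescribed labeled segment spanning $v$ vertices lies in a uniformly random offset Hamilton cycle with probability $\Theta(n^{1-v})$; and $s$ segments may be placed inside $C_1$ in $O(n^{s})$ ways. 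Collecting exponents of $n$, a pattern of $s$ segments contributes at most $\mathrm{poly}(n)\cdot n^{\,2s-s\ell}=\mathrm{poly}(n)\cdot n^{\,s(2-\ell)}$ towards the sum, and summing over all patterns yields $o(1)$ exactly when $\ell\ge 3$ — and then $k\ge 2\ell\ge 6$ automatically, which explains the hypotheses of (ii). I expect the main obstacle to be making this last estimate rigorous once segment lengths are unbounded: then $n!/(n-v)!$ is no longer comparable to $n^{v}$ and the internal symmetry count of a segment is no longer negligible, so long segments must be handled separately; one must also keep track of the two possible overlap sizes at the ends of each segment and check that the resulting series converges.

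Finally, part (iii): when $\ell=2$ the estimate of the previous paragraph degenerates, as $n^{s(2-\ell)}=n^{0}$ does not tend to $0$; but at $p=\omega/n^{k/2}$ each factor $p^{-j}$ acquires an extra $\omega^{-j}\to 0$, and this is enough to push the same second moment argument through for $\ell=2$ as well. Alternatively, part (iii) is exactly the case $\ell=2$ of the consequence of Corollary~3.1 in~\cite{PP} recalled before the statement of Theorem~\ref{thm:main}.
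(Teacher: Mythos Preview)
Your proposal is correct and follows essentially the same route as the paper: first moment for part~(i), second moment for parts~(ii) and~(iii), with the intersection $E(C_1)\cap E(C_2)$ decomposed into $a$ maximal paths (your ``segments'') of total length $b$, and the key inequality $|V(P_i)|\ge \tfrac{k}{2}b_i+\ell$ driving the exponent count $n^{a(2-\ell)}$. The paper writes this via $N(b,a)$ rather than via $\E_{C_2}[p^{-|E(C_1)\cap E(C_2)|}]$, but the two are equivalent.

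Two small remarks on where your anticipated difficulties sit relative to the actual computation. First, you worry that long segments force a separate treatment because $n!/(n-v)!$ is not comparable to $n^v$; in fact the paper never makes that approximation. It keeps the factorial $(n-bk/2-a\ell)!$ intact, applies Stirling, and uses only the crude bound $(n-bk/2-a\ell)^{n-bk/2-a\ell}\le n^{\,n-bk/2-a\ell}$, which is valid uniformly in $b$. No case split is needed. Second, the point you should flag more explicitly is the convergence of the double sum over $(a,b)$: your exponent bookkeeping gives a bound of the shape $\binom{b-1}{a-1}(C/n^{\ell-2})^a$ per pattern, and the sum over $a$ produces $(1+C/n^{\ell-2})^{b-1}\cdot C/n^{\ell-2}$; it is then the extra factor $(1+\varepsilon)^{-b}$ hidden in $p^{-b}$ that makes the sum over $b$ a convergent geometric series. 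For part~(iii) this $(1+\varepsilon)^{-b}$ is replaced by $\omega^{-b}$, exactly as you say.
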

\noindent
\noindent
The proof for Theorem~\ref{thm:main} presented in the next section is based on the second moment method, similar to the proof of Theorem~\ref{thm:tight}. Observe that the only case not covered by Theorem~\ref{thm:main} is $\ell=1$; we will comment on this in Section~\ref{sec:conl}. Moreover, we will also see in Sections~\ref{sec:123} and~\ref{sec:connect} that the structures captured by offset Hamiltonian cycles arise in a natural way in a problem related to the 1-2-3 Conjecture.

\section{Proof of Theorem~\ref{thm:main}}\label{sec:main}

Let $X$ be the random variable that counts the number of $\ell$-offset Hamiltonian cycles. Observe that the number of cycles in the complete $k$-uniform hypergraph is:
\[
\gamma_n:=\frac{n!}{2n} \cdot \frac{k-\ell}{(\ell!(k-\ell)!)^{n/k}}.
\]
Indeed, first we order all vertices into a cycle, next we add $2n/k$ edges, which can be shifted by at most $k-\ell$ positions, and finally we need to correct this by permuting all vertices in any two consecutive edges.

Using Stirling's formula we have
\[
\E(X) =\gamma_n \cdot p^{2n/k} = (1+o(1))(k-\ell)\sqrt{\frac{\pi}{2n}} \cdot \left( \frac{n}{e} \cdot \left(\frac{p^2}{\ell!(k-\ell)!}\right)^{1/k}\right)^n
\]
and letting
\[
p = (1-\varepsilon)\sqrt{e^{k}\ell!(k-\ell)!/n^{k}},
\]
we have
\begin{align*}
\E(X) &= (1+o(1))(k-\ell)\sqrt{\frac{\pi}{2n}} \cdot \left( \frac{n}{e} \cdot \left(\frac{\left((1-\varepsilon)\sqrt{e^{k}\ell!(k-\ell)!/n^{k}}\right)^2}{\ell!(k-\ell)!}\right)^{1/k}\right)^n\\&= (1+o(1))(k-\ell)\sqrt{\frac{\pi}{2n}} (1-\varepsilon)^{2n/k}=o(1).
\end{align*}
This verifies part \eqref{thm:a}.

Now we let 
\[
p = (1+\varepsilon)\sqrt{e^{k}\ell!(k-\ell)!/n^{k}}
\]
and let $H$ be a fixed $\ell$-offset Hamiltonian cycle. Observe that 
\[
\E(X)=(1+o(1))(k-\ell)\sqrt{\frac{\pi}{2n}} (1+\varepsilon)^{2n/k}=\infty.
\]
Let $N(b,a)$ be the number of $H'$ $\ell$-offset Hamiltonian cycles such that $|E(H)\cap E(H')|=b$ and $E(H)\cap E(H')$ consists of $a$ edge disjoint paths. Since trivially $N(0,0)\le \gamma_n$, we obtain
\begin{align*}\label{eq:var0}
\begin{split}
\frac{\E(X^2)}{\E(X)^2}&=\frac{\gamma_n N(0,0)p^{4n/k}}{\E(X)^2}
+ \sum_{b=1}^{2n/k} \sum_{a=1}^{\min\{b,n/k\}}\frac{\gamma_n
N(b,a) p^{4n/k-b}} {\E(X)^2}\\
&\le 1 + \sum_{b=1}^{2n/k} \sum_{a=1}^{\min\{b,n/k\}}\frac{\gamma_n
N(b,a) p^{2n/k-b}} {\E(X)}.
\end{split}
\end{align*}

It remains to show that 
\[
\sum_{b} \sum_{a} \frac{N(b,a)
p^{2n/k-b}} {\E(X)}=o(1)
\]
so that we can use Chebyshev's inequality to imply that
\begin{equation}\label{eq:cheb}
\Pr(X=0) \le \frac{\E(X^2)}{\E(X)^2} - 1=o(1),
\end{equation}
as required.

To find an upper bound on $N(b,a)$ we first consider how many ways we can find paths $P_1,P_2,\ldots,P_a$ with a total of $b$ edges. To begin, for each $1\le i\le a$ choose vertices $v_i$ on $V(H)$. We have at most
\begin{equation}\label{x2}
n^a
\end{equation}
choices. Let
\[
b_1+b_2+\dots+b_a = b,
\]
where $b_i\ge 1$ is an integer for every $1\le i\le a$. Note that this equation has exactly
\begin{equation}\label{x3}
{b-1\choose a-1}
\end{equation}
solutions. So for every $i$, we choose a path of length $b_i$ in $H$ which starts at $v_i$ and it moves clockwise. Thus we \eqref{x2} and \eqref{x3} tell us we have at most
\begin{equation}\label{eq:b}
{b-1\choose a-1} n^a
\end{equation}
ways to choose our paths.

Now we count the number of $H'$ containing $P_1,\dots,P_a$. For each even path (that means with even number of edges) 
\[
|V(P_i)| = \frac{b_ik}{2}+\ell \quad \text{ or }\quad |V(P_i)| = \frac{b_ik}{2}+(k-\ell)
\]
and for each odd path
\[
|V(P_i)| = \frac{b_ik}{2}+\frac{k}{2}.
\]
Since $\ell \le k/2$ then for all paths we have
\[
|V(P_i)| \ge \frac{b_ik}{2}+\ell.
\]
Then
\begin{align*}
\sum_{i} |V(P_i)| &\ge \sum_{i} \left( \frac{b_ik}{2}+\ell \right)=bk/2+a\ell.
\end{align*}
Thus, we have at most $n - bk/2-a\ell$ vertices not in $\bigcup_{i=1}^aV(P_i)$. Observe that $H'$ is uniquely determined by the sequence of $2n/k$ subsets each of sizes alternating from $k-\ell$ to $\ell$. For each $V(P_i)$, if $b_i=1$ then we need to divide $|V(P_i)|=k$ vertices into 2 subsets of size $k-\ell$ and $\ell$. The number of ways these paths can be split into alternating subsets is at most
\begin{equation}\label{eq:2ka}
\binom{k}{\ell}^a\le\binom{k}{k/2}^a < 2^{ka}.
\end{equation}
(If $b_i>1$, then there is nothing to do.)

Next we divide the vertices in $V(H) \setminus (V(P_1)\cup\dots\cup V(P_a))$ into subsets of size $\ell$ and $k-\ell$ to obtaining a cycle of alternating subsets. Let $b_i'$ be the number of edges in $H'$ that lie between $P_i$ and $P_{i+1}$ and connect $P_i$ with $P_{i+1}$.
Then there are exactly $b_i'-1$ alternating subsets between $P_i$ and $P_{i+1}$ in $H'$. Thus, we have at least $(b_i'-2)/2$ groups of size $\ell$ and of size $k-\ell$ between $P_i$ and $P_{i+1}$. Since
\[
\sum_{i=1}^a (b_i'-2)/2=\left(\frac{2n}{k}-b\right)/2-a = n/k - b/2  -a,
\]
we conclude that we have at least $(n/k - b/2  -a)$ groups of size $\ell$ and at least $(n/k - b/2  -a)$ groups of size $k-\ell$ on $V(H) \setminus (V(P_1)\cup\dots\cup V(P_a))$. Consequently, we can divide $V(H) \setminus (V(P_1)\cup\dots\cup V(P_a))$ into alternating groups in at most
\begin{equation}\label{x4}
\frac{(n-bk/2-a\ell)!}{2(n-bk/2-a\ell)} \cdot \frac{1}{(\ell!(k-\ell)!)^{n/k - b/2  -a}} = \left(n-bk/2-a\ell-1\right)! \cdot\frac{1}{2(\ell!(k-\ell)!)^{n/k - b/2  -a}}
\end{equation}
choices.

Now mark $a$ positions to insert $P_i$'s. We can trivially do it in 
\begin{equation}\label{x5}
(n-bk/2-a\ell)^a \le (n-bk/2-a\ell) \cdot n^{a-1}
\end{equation}
ways. 

Using \eqref{eq:2ka}, \eqref{x4} and \eqref{x5}, the number $H'$'s containing $P_1, P_2,\dots,P_a$ is smaller than
\begin{equation}\label{eq:pi}
2^{ka} \cdot (n-bk/2-a\ell)! \frac{1}{(\ell!(k-\ell)!)^{n/k - b/2  -a}} \cdot n^{a-1}.
\end{equation}
Thus, by \eqref{eq:b} and \eqref{eq:pi} we obtain
\[
N(b,a) < {b-1\choose a-1} \cdot 2^{ka} \cdot (n-bk/2-a\ell)! \frac{1}{(\ell!(k-\ell)!)^{n/k - b/2  -a}} \cdot n^{2a-1}
\]
and so
\begin{align*}
\frac{N(b,a) p^{2n/k-b}} {\E(X)} &\le {b-1\choose a-1} \frac{2^{ka}\cdot(n-bk/2-a\ell)!\cdot n^{2a-1}\cdot p^{2n/k-b}\cdot 2n\cdot(\ell!(k-\ell)!)^{n/k}}{(\ell!(k-\ell)!)^{n/k - b/2  -a}\cdot n!\cdot (k-\ell)p^{2n/k}}\\
&= {b-1\choose a-1} \frac{(\ell!(k-\ell)!)^{b/2}\cdot(n-bk/2-a\ell)!}{n!\cdot p^b}\cdot \frac{(\ell!(k-\ell)!)^{a}\cdot 2^{ka+1}\cdot n^{2a}}{(k-\ell)}.
\end{align*}
Using Stirling's approximation, letting $p=(1+\varepsilon)\sqrt{e^{k}\ell!(k-\ell)!/n^{k}}$, and observing that $n-bk/2-a\ell\le n$ we have
\begin{align*}
\frac{N(b,a) p^{2n/k-b}} {\E(X)}&\le {b-1\choose a-1} \frac{(\ell!(k-\ell)!)^{b/2}\cdot\left(\frac{n-bk/2-a\ell}{e}\right)^{(n-bk/2-a\ell)}}{\left(\frac{n}{e}\right)^{n}\cdot(1+\varepsilon)^b \cdot \frac{e^{bk/2}(\ell!(k-\ell)!)^{b/2}}{n^{bk/2}}}\cdot \frac{(\ell!(k-\ell)!)^{a}\cdot 2^{ka+1}\cdot n^{2a}}{(k-\ell)}\\
&\le {b-1\choose a-1} \frac{(\ell!(k-\ell)!)^{b/2}\cdot\left(\frac{n}{e}\right)^{(n-bk/2-a\ell)}}{\left(\frac{n}{e}\right)^{n}\cdot(1+\varepsilon)^b \cdot \frac{e^{bk/2}(\ell!(k-\ell)!)^{b/2}}{n^{bk/2}}}\cdot \frac{(\ell!(k-\ell)!)^{a}\cdot 2^{ka+1}\cdot n^{2a}}{(k-\ell)}\\
&=\frac{2}{k-\ell}\left(\frac{1}{1+\varepsilon}\right)^b{b-1\choose a-1}\left(\frac{2^k(e^{\ell}\ell!(k-\ell)!}{n^{\ell-2}}\right)^a.
\end{align*}
This implies that
\begin{align*}
\sum_{b}\sum_{a}\frac{N(b,a) p^{2n/k-b}} {\E(X)}&\le\frac{2}{k-\ell}\sum_{b}\left(\frac{1}{1+\varepsilon}\right)^b\sum_{a}{b-1\choose a-1}\left(\frac{2^ke^{\ell}\ell!(k-\ell)!}{n^{\ell-2}}\right)^a.
\end{align*}
Since
\begin{align*}
\sum_{a=1}^b{b-1\choose a-1}\left(\frac{2^ke^{\ell}\ell!(k-\ell)!}{n^{\ell-2}}\right)^a
&=\frac{2^ke^{\ell}\ell!(k-\ell)!}{n^{\ell-2}}\sum_{a=1}^{b}{b-1\choose a-1}\left(\frac{2^ke^{\ell}\ell!(k-\ell)!}{n^{\ell-2}}\right)^{a-1}\\
&=\frac{2^ke^{\ell}\ell!(k-\ell)!}{n^{\ell-2}}\left(1+\frac{2^ke^{\ell}\ell!(k-\ell)!}{n^{\ell-2}}\right)^{b-1}\\
&\le\frac{O(1)}{n^{\ell-2}}\left(1+\frac{O(1)}{n^{\ell-2}}\right)^b,
\end{align*}
we get for $\ell\ge 3$ and $\varepsilon >0$ that 
\[
\sum_{b} \sum_{a} \frac{N(b,a) p^{2n/k-b}} {\E(X)}\le\frac{O(1)}{n^{\ell-2}}  \sum_b \left(\frac{1 +   \frac{O(1)}{n^{\ell-2}}  }{1+\varepsilon}\right)^b
\le \frac{O(1)}{n^{\ell-2}} \cdot O(\varepsilon) = o(1).
\]
This proves part \eqref{thm:b}.

Furthermore, if $\ell=2$ and $\displaystyle{p=\frac{\omega}{n^{k/2}}}$ (that means $\varepsilon=\omega$), then
\[
\sum_{b} \sum_{a} \frac{N(b,a) p^{2n/k-b}} {\E(X)}
\le O(1) \sum_{b\ge 1}   \left(\frac{1 +   O(1)  }{\omega}\right)^b
\le O(1) \cdot \frac{1 +   O(1)  }{\omega} = o(1).
\]
This proves part \eqref{thm:c} and completes the proof of Theorem~\ref{thm:main}.

\section{Group colorings}\label{sec:123}

The well-known 1-2-3 Conjecture of Karo\'nski, \L{}uczak and Thomason~\cite{KLT} asserts that in every graph (without isolated edges) 
the edges can have assigned weights from $\{1,2,3\}$ so that adjacent vertices have different sums of incident edge weights.
This conjecture attracted a lot of attention and has been studied by several researchers (see, e.g.,  a survey paper of Seamone~\cite{Seamone}). 
The 1-2-3 conjecture is still open but Kalkowski, Karo\'nski, and Pfender~\cite{5enough} have shown that the conjecture holds if $\{1,2,3\}$ is replaced by $\{1,\dots,5\}$. (For previous results see  \cite{30enough, Gnp, 13enough}).

One can extend these ideas by considering $k$-uniform hypergraphs $H$. A vertex coloring of $H$ is \emph{weak} if $H$ has no monochromatic edge and \emph{strong} if for each edge all vertices within that edge have distinct colors. Then we say that $H$ is \emph{weakly} \emph{$w$-weighted} if there exists an edge coloring from $[w]$ induces a weak vertex-coloring. Similarly, we say that $H$ is \emph{strongly} \emph{$w$-weighted} if the corresponding coloring is strong.
Clearly each strongly $w$-weighted hypergraph is also weakly $w$-weighted. Note that for graphs ($k=2$) weak and strong colorings (and therefore weightings) are the same.

In~\cite{KKP} Kalkowski, Karo\'nski, and Pfender studied weakly weighted hypergraphs.  In particular, they proved that any $k$-uniform hypergraph without isolated edges is weakly $(5k-5)$-weighted and that 3-uniform hypergraphs are even weakly 5-weighted. They also asked whether there is an absolute constant~$w_0$ such that every $k$-uniform hypergraph is weakly $w_0$-weighted. Furthermore, they conjectured that each 3-uniform hypergraph without isolated edges is weakly 3-weighted. It was shown by Bennett, Dudek, Frieze and Helenius~\cite{BDFH} that for almost all uniform hypergraphs these conjectures hold. 

In this paper we explore another direction, where edge-weights are elements of an abelian group. This was introduced by Karo\'nski, \L{}uczak and Thomason~\cite{KLT}.
\begin{theorem}[\cite{KLT}]\label{thm:Gabel}
Let $\Gamma$ be a finite abelian group of odd order and let $G$ be a non-trivial $|\Gamma|$-colorable graph. Then there is a weighting of the edges of $G$ with the elements of $\Gamma$ such that the resultant vertex weighting is a proper coloring.
\end{theorem}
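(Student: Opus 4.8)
The plan is to prove Theorem~\ref{thm:Gabel} by induction on the number of edges of $G$, following the standard approach for 1-2-3-type results but carried out inside the group $\Gamma$. First I would fix a proper $|\Gamma|$-coloring $c\colon V(G)\to\Gamma$, which exists by hypothesis, and think of it as an ``initial labeling'' that we want to reproduce, up to a correction, via edge weights. Reorder the vertices so that the current (partial) vertex sums can be adjusted one vertex at a time. The key device is that $|\Gamma|$ is odd: for any nonzero $g\in\Gamma$, the map $x\mapsto 2x=x+x$ is a bijection on $\Gamma$, so ``doubling'' is invertible, and more importantly we can always move a vertex's weighted degree-sum by an arbitrary prescribed amount by pushing a suitable element along an incident edge, because adding $t$ to one endpoint forces adding $t$ to the other, but the oddness lets us resolve the resulting parity-type obstruction.

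The concrete induction step: remove a vertex $v$ (or an edge at a leaf, after first reducing to the case where $G$ is connected and handling each component separately), apply the inductive hypothesis to the smaller graph to get a $\Gamma$-weighting whose induced vertex coloring is proper there, and then re-introduce $v$ and its incident edges. We must choose weights on the edges at $v$ so that (a) $v$'s weighted sum differs from that of each of its (at most $|\Gamma|-1$ relevant, if we were careful, or in general all) neighbors, and (b) re-adding these edges does not destroy propriety among the already-treated vertices. To get (b) we exploit that changing the weight on an edge $uv$ by $t$ shifts both $u$'s and $v$'s sums by $t$, so if we had a conflict between $u$ and some earlier-processed neighbor $u'$ we can repair it; the oddness of $|\Gamma|$ guarantees the linear system of repairs over $\Gamma$ is solvable. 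Alternatively, and cleaner, process vertices in a fixed order $v_1,\dots,v_n$ and when we reach $v_i$ assign weights to the edges from $v_i$ to $\{v_1,\dots,v_{i-1}\}$ that are still unweighted (back-edges in the ordering); at this moment all of $v_i$'s back-neighbors already have their final sums, so we only need $v_i$'s eventual sum to avoid finitely many forbidden values, and since each unassigned back-edge gives us a free translation of $v_i$'s sum by an arbitrary group element (over all back-edges, the sums span all of $\Gamma$ provided $v_i$ has at least one back-edge, which holds if $G$ has no isolated edge and is connected), we can always dodge them — the oddness enters to handle the single troublesome case where a vertex has exactly one unweighted incident edge forming a ``pendant pair,'' which is exactly where adding $t$ to both endpoints could fail to separate them unless $2t\ne 0$ is achievable.

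The main obstacle I expect is the base/degenerate case and the bookkeeping around vertices of small degree: a graph that is ``non-trivial'' (meaning, as in the 1-2-3 setting, no component is a single edge $K_2$) can still have many leaves and many edges that are the unique incident edge of a vertex, and these are precisely the configurations where a single edge weight must simultaneously fix two vertices. Handling a path or even cycle by hand, and then using the oddness of $|\Gamma|$ to show that the ``alternating sum'' obstruction that would kill the even-$|\Gamma|$ case (think $\Gamma=\mathbb Z/2$) vanishes, is the crux. I would therefore organize the proof as: (1) reduce to $G$ connected with no $K_2$ component; (2) if $G$ is a cycle or a path, solve directly, using $2\colon\Gamma\to\Gamma$ bijective; (3) otherwise, delete a leaf or split at a cut-vertex to drop the edge count, apply induction, and re-attach, adjusting along the newly added edges, again invoking invertibility of doubling to solve the resulting system over $\Gamma$. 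Throughout, ``proper coloring'' for the vertex weighting means adjacent vertices receive distinct elements of $\Gamma$, and the role of the hypothesis that $G$ is $|\Gamma|$-colorable is only to supply enough ``room'' — it is used to seed the process so that the number of forbidden values at each vertex never exceeds $|\Gamma|-1$.
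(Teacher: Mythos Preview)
The paper does not give its own proof of Theorem~\ref{thm:Gabel}; it is quoted from \cite{KLT}. However, the paper does prove the hypergraph analogue, Theorem~\ref{thm:Habel}, and explicitly says that proof is ``only slight modification of the proof as presented in~\cite{KLT}.'' That argument is nothing like yours. One fixes the target proper coloring $c:V\to\Gamma$ at the outset, uses oddness of $|\Gamma|$ once to solve $\sum_v c(v)=2h$, puts weight $h$ on a single edge and $0$ elsewhere so that the induced vertex weighting $c'$ already satisfies $\sum_v c'(v)=\sum_v c(v)$, and then iteratively repairs mismatches: pick a wrong vertex $x$, find another wrong vertex $y$ (guaranteed by the sum identity), take a walk from $x$ to $y$, and alternately add and subtract $g=c'(x)-c(x)$ along its edges. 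Interior vertices are unchanged, $x$ becomes correct, the sum identity persists, and the number of wrong vertices drops. There is no induction on edges, no greedy processing, and the $|\Gamma|$-colorability hypothesis is used only to produce the \emph{target} $c$ that one then realises exactly --- not to bound a count of forbidden values.

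Your scheme, by contrast, has a real gap. In the ``process $v_1,\dots,v_n$ in order and weight back-edges'' version you assert that when you reach $v_i$ ``all of $v_i$'s back-neighbors already have their final sums.'' That is false: assigning a weight to the back-edge $v_iv_j$ shifts $v_j$'s sum just as much as $v_i$'s, so a conflict between $v_j$ and one of \emph{its} earlier neighbours can be created at step $i$, after $v_j$ was supposedly finished. Your fallback, that ``the oddness of $|\Gamma|$ guarantees the linear system of repairs over $\Gamma$ is solvable,'' is not an argument --- you never write down the system, and in fact the edge-to-vertex map $\Gamma^{E}\to\Gamma^{V}$ is not surjective on a bipartite component even when $|\Gamma|$ is odd (its image is cut out by $\sum_{A}f=\sum_{B}f$), so a generic target cannot be hit and blanket solvability claims fail. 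The same objection applies to the ``delete a vertex, apply induction, re-attach'' variant: re-attaching changes all neighbours' sums simultaneously, and you give no mechanism to absorb those changes without cascading. Your instinct that invertibility of $x\mapsto 2x$ is the crux is correct, but in the actual proof it is used exactly once, to set up the global sum identity $\sum c=\sum c'$, not as a local repair tool inside an induction.
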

\noindent It was our attempts to extend this idea to $k$-uniform hypergraphs that led us to consider the concept of offset Hamilton cycles. So for $u,v\in V(H)$ let $T$ be a $uv$-trail (that means a sequence of vertices with repeated vertices allowed) in $H$ such that
\begin{enumerate}[(i)]
\item the first two edges have $k-1$ vertices in common not including $u$, and 
\item the last two edges have $k-1$ vertices in common not including $v$, and 
\item any two successive edges in the trail have either $1$ or $k-1$ vertices in common in an alternating fashion. 
\end{enumerate}

Any trail that fits this pattern will be called \emph{$1$-offset}. Thus the vertices along $T$ are subdivided into alternating groups of size $1$ and $k-1$, starting and ending with subdivisions of $1$. Observe that this condition implies the number of edges in $T$ must be even. Also, since $T$ is a trail, vertices can get used in multiple edges, including $u$ and $v$, but the first two edges must start with $u$ as a singleton and the last two edges must finish with $v$ as a singleton~(see Figure~\ref{fig:trail}). Let $\mathscr{T}$ be the hypergraph property that for all pairs of vertices $u,v\in V(H)$ there exists a $1$-offset $uv$-trail. If hypergraph $H\in \mathscr{T}$, then we say that $H$ is $\mathscr{T}$-\emph{connected}.

\begin{figure}[h]
\begin{tikzpicture}
\draw [white, fill=gray!70!white, rounded corners=12.5pt] 
    (-175:2.45) arc (-175:-240:2.45) -- (3.25,2.2) -- (3.25,1.3) --
    (-240:1.55) arc (-240:-175:1.55) -- cycle;
\draw [white, fill=gray!50!white, rounded corners=10pt] 
    (-115:2.35) arc (-115:-240:2.35) -- (1.25,2.1) -- (1.25,1.4) --
    (-240:1.65) arc (-240:-115:1.65) -- cycle;
\draw [white, fill=gray!30!white, rounded corners=7.5pt] 
    (65:2.25) arc (65:-125:2.25) -- 
    (-125:1.75) arc (-125:65:1.75) -- cycle;
\draw [white, fill=gray!10!white, rounded corners] 
    (122:2.3)--(65:2.15) arc (65:-65:2.15) -- 
    (-65:1.85) arc (-65:60:1.85) -- (127:2)-- cycle;
\draw [rounded corners=12.5pt] 
    (-175:2.45) arc (-175:-240:2.45) -- (3.25,2.2) -- (3.25,1.3) --
    (-240:1.55) arc (-240:-175:1.55) -- cycle;
\draw [rounded corners=10pt] 
    (-115:2.35) arc (-115:-240:2.35) -- (1.25,2.1) -- (1.25,1.4) --
    (-240:1.65) arc (-240:-115:1.65) -- cycle;
\draw [rounded corners=7.5pt] 
    (65:2.25) arc (65:-125:2.25) -- 
    (-125:1.75) arc (-125:65:1.75) -- cycle;
\draw [rounded corners] 
    (122:2.3)--(65:2.15) arc (65:-65:2.15) -- 
    (-65:1.85) arc (-65:60:1.85) -- (127:2)-- cycle;
\foreach \n in {1,...,6}{\draw[fill=black] (\n*60:2) circle (0.05);};
\draw[fill=black] (30:3.464) circle (0.05);
\node [left] at (125:2) {$u$};
\node [left] at (30:3.464) {$v$};
\node at (172:2) {$v_5$};
\node [left] at (236:1.85) {$v_4$};
\node [left] at (296:2.15) {$v_3$};
\node at (350:2.05) {$v_2$};
\node [right] at (415:2.3) {$v_1$};
\end{tikzpicture}
\caption{A $1$-offset $uv$-trail, $u,v_1,v_2,v_3,v_4,v_5,u,v_1,v$, in a $4$-uniform hypergraph that consists of $4$ edges.}
\label{fig:trail}
\end{figure}
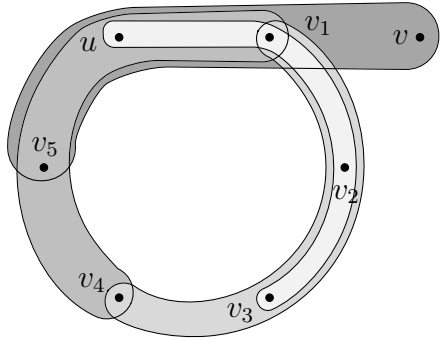

Hypergraph property $\mathscr{T}$ is what allows us to state and prove a result analogous to Theorem~\ref{thm:Gabel} with only slight modification of the proof as presented in~\cite{KLT}.
\begin{theorem}\label{thm:Habel}
Let $\Gamma$ be a finite abelian group of order $w$ and let $H$ be a $k$-uniform hypergraph that is $\mathscr{T}$-connected and  strongly (weakly) $w$-colorable. Furthermore, let $\gcd{(w,k)}=1$. Then $H$ is strongly (weakly) $w$-weighted by the elements of $\Gamma$.
\end{theorem}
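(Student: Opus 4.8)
\subsection*{Proof proposal}

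The plan is to follow the proof of Theorem~\ref{thm:Gabel} in~\cite{KLT}, recast as a surjectivity statement for the incidence map. Identify the set of $w$ colors with the group $\Gamma$, and let $c\colon V(H)\to\Gamma$ be a strong (resp.\ weak) $w$-coloring, which exists by hypothesis. Let $\Phi\colon\Gamma^{E(H)}\to\Gamma^{V(H)}$ be the group homomorphism sending an edge weighting $f$ to the vertex weighting $s_f$ defined by $s_f(v)=\sum_{e\ni v}f(e)$. It suffices to prove that $\Phi$ is onto: then some $f$ satisfies $s_f=c$, and $s_f$ is a strong (resp.\ weak) vertex coloring because $c$ is, so $H$ is strongly (resp.\ weakly) $w$-weighted by the elements of $\Gamma$.

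The main step is a ``trail move''. Fix distinct $u,v\in V(H)$. By $\mathscr{T}$-connectedness there is a $1$-offset $uv$-trail $T$; let its edges be $E_1,\dots,E_{2t}$ (an even number) and its consecutive vertex groups $G_1=\{u\},G_2,\dots,G_{2t},G_{2t+1}=\{v\}$, so that $E_i=G_i\cup G_{i+1}$ with $G_i$ and $G_{i+1}$ disjoint. Given $g\in\Gamma$, define $f$ to be $g$ on $E_i$ for odd $i$, $-g$ on $E_i$ for even $i$, and $0$ on all other edges of $H$. Then for a vertex $x$,
\[
s_f(x)=g\sum_{i=1}^{2t}(-1)^{i+1}\bigl(\mathbf{1}[x\in G_i]+\mathbf{1}[x\in G_{i+1}]\bigr),
\]
and this sum telescopes: every term carrying an index $2\le j\le 2t$ appears twice with opposite signs, so it cancels, leaving $s_f(x)=g\bigl(\mathbf{1}[x\in G_1]-\mathbf{1}[x\in G_{2t+1}]\bigr)$, i.e.\ $s_f$ equals $g$ at $u$, equals $-g$ at $v$, and vanishes elsewhere. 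The evenness of a $1$-offset trail is exactly what makes the two endpoints occur with opposite signs, and repeated vertices along $T$ cause no trouble since every interior occurrence of a vertex contributes two cancelling terms. Consequently, for all distinct $u,v$ and all $g\in\Gamma$, the function equal to $g$ at $u$, $-g$ at $v$, and $0$ elsewhere lies in the image of $\Phi$; these functions generate the subgroup of all $\Gamma$-valued functions on $V(H)$ whose coordinates sum to $0$, so that entire subgroup is contained in the image of $\Phi$.

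To finish, pick any edge $E_0$ of $H$ (one exists since $H$ is $\mathscr{T}$-connected, the case $|V(H)|\le 1$ being vacuous). For $g\in\Gamma$, the weighting equal to $g$ on $E_0$ and $0$ elsewhere produces a vertex weighting whose coordinate sum is $kg$. Since $\gcd(w,k)=1$, multiplication by $k$ is an automorphism of $\Gamma$, so $kg$ runs over all of $\Gamma$ as $g$ does; hence the image of $\Phi$ meets every coset of the sum-zero subgroup, and therefore equals $\Gamma^{V(H)}$. Thus $\Phi$ is onto, and choosing $f$ with $s_f=c$ completes the proof; the strong and weak cases are identical, differing only in which colorability hypothesis supplies $c$.

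The one delicate point I expect is the telescoping identity for the trail move — in particular checking it is unaffected by vertices being repeated along the trail — together with the clean bookkeeping of which edges of $H$ outside $T$ are left with weight $0$. It is also worth pointing out that the hypothesis $\gcd(w,k)=1$ enters precisely to correct the total vertex-sum via a single-edge weighting; for graphs ($k=2$) this is exactly the requirement in Theorem~\ref{thm:Gabel} that $\Gamma$ have odd order.
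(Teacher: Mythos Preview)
Your proof is correct and uses essentially the same ingredients as the paper's: the $1$-offset trail move that shifts an arbitrary group element $g$ from one vertex to another while leaving all other induced weights unchanged, together with a single-edge weighting and the observation (your multiplication-by-$k$ automorphism, the paper's Observation~\ref{claim:abel}) that $\gcd(w,k)=1$ lets one realise any total vertex sum. The only difference is packaging: the paper first weights one edge to match the total sum $\sum_v c(v)=kh$ and then iteratively corrects vertices one at a time via trail moves, whereas you phrase the whole thing as surjectivity of the incidence homomorphism $\Phi$, first showing its image contains the sum-zero subgroup and then that it meets every coset. Your telescoping computation for the trail move is exactly the mechanism behind the paper's ``alternately subtract and add $g$ along each edge of $T$'', and your handling of repeated vertices is correct.
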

We will need a simple fact.
\begin{observation}\label{claim:abel}
Let $\Gamma$ be an additive finite abelian group of order $w$ and let $k$ be a positive integer with $\gcd{(w,k)}=1$. Then for every $g\in \Gamma$
there exists $h\in\Gamma$ such that $g=kh$.
\end{observation}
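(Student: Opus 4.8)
The plan is to combine B\'ezout's identity with the fact that every element of a finite group of order $w$ is annihilated by $w$. First I would recall that, by Lagrange's theorem, the order of any element $g\in\Gamma$ divides $|\Gamma|=w$, so that $wg=0$ for every $g\in\Gamma$. Next, since $\gcd(w,k)=1$, choose integers $s,t$ with $sk+tw=1$. Given $g\in\Gamma$, I claim that $h:=sg$ does the job: indeed,
\[
kh=k(sg)=(sk)g=(1-tw)g=g-t(wg)=g-t\cdot 0=g ,
\]
so $g=kh$, as desired.

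Alternatively, one can give a non-constructive argument: the map $\varphi\colon\Gamma\to\Gamma$ defined by $\varphi(x)=kx$ is a group homomorphism, and it is injective because $kx=0$ forces the order of $x$ to divide both $k$ and $w$, hence to divide $\gcd(w,k)=1$, so $x=0$. An injective self-map of a finite set is a bijection, so $\varphi$ is onto, and every $g\in\Gamma$ has the form $kh$. The first (B\'ezout) version has the advantage of exhibiting $h$ explicitly, which may be convenient when this observation is invoked in the proof of Theorem~\ref{thm:Habel}.

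There is no real obstacle here; the only point worth isolating is the identity $wg=0$, which is exactly what makes the term involving the B\'ezout coefficient $t$ vanish, reducing everything to a one-line computation.
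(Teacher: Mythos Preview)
Your proposal is correct. In fact, your ``alternative'' argument via the homomorphism $\varphi(x)=kx$ is exactly the proof the paper gives (with slightly sharper phrasing on your part: the paper says $\ker\varphi$ consists of the identity and elements ``of order $k$'', whereas you correctly note it is elements whose order divides $k$).

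Your primary argument via B\'ezout's identity is a genuinely different route: instead of showing multiplication-by-$k$ is a bijection and then invoking surjectivity abstractly, you write down the preimage explicitly as $h=sg$ where $sk+tw=1$. This buys you constructiveness, as you note. The homomorphism argument, on the other hand, makes it transparent that $h$ is \emph{unique}, which your B\'ezout computation does not directly reveal (though it follows easily). For the application in Theorem~\ref{thm:Habel} neither the explicit form of $h$ nor its uniqueness is actually needed, so the two approaches are interchangeable here.
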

\begin{proof}
Define the group homomorphism $\varphi : \Gamma\to\Gamma$ as $\varphi(g)=kg$. Then $\operatorname{ker}\varphi$ consists of the identity and all group elements of order $k$. But $\gcd{(w,k)}=1$, so there are no groups elements of order $k$ by Lagrange's Theorem. Thus $\operatorname{ker}\varphi$ is trivial and since $\Gamma$ is finite, we conclude that $\varphi$ is an isomorphism. Thus for any $g\in\Gamma$ there exists $h\in\Gamma$ such that $g=kh$.
\end{proof}

\begin{proof}[Proof of Theorem~\ref{thm:Habel}]
Fix a strong (weak) vertex coloring $c: V\to \Gamma$ of $H=(V,\mathcal{E})$. Then by Observation~\ref{claim:abel}, we know that there exists $h\in\Gamma$ such that 
\[
\sum_{v\in V} c(v)=kh.
\]
Now select an arbitrary $f\in \mathcal{E}$ and let it have weight $h$ with all other edges given weight $0$. This induces a vertex coloring $c'$  and if for all vertices $v$ we have $c(v) = c'(v)$, then there is nothing else to do. So we may assume that there exists a vertex $x$ such that $c(x)\ne c'(x)$. Then there must be another vertex $y\neq x$ such that $c(y)\ne c'(y)$. This immediately follows from the fact that $\sum_{v} c(v) = \sum_{v} c'(v)$.

Then let $T$ be the trail between $x$ and $y$ guaranteed to us by property $\mathscr{T}$ and let $g=c'(x)-c(x)$. By alternately subtracting and adding $g$ along each edge of $T$, we redefine $c'$ and end up with $c(x)=c'(x)$. Furthermore, since $T$ has an even length, the colors induced by $c'$ of all the other vertices of $T$ remain unchanged except possibly $y$. More importantly,  equality $\sum_{v} c(v) = \sum_{v} c'(v)$ still holds.

Repeated application of this process will eventually terminate in an edge weighting $c'$ of $H$ for which $c(v)=c'(v)$ for all $v\in V$. This is because once a vertex has been corrected, it can only ever be an internal vertex for every future trail chosen, which leaves the corrected coloring unchanged for all future iterations and each iteration of this process leaves us with at least one less vertex with an incorrect induced color. 
\end{proof}

\section{$\mathscr{T}$-Connectivity}\label{sec:connect}

In the previous section, we assume that $H$ is $\mathscr{T}$-connected. We would like to know what the threshold for $\mathscr{T}$-connectivity in $H_{n,p}^{(k)}$ might be. Heuristically, since the probability that a given $1$-offset $uv$-trail of length $2$ existing is $p^2$, then for fixed $u$ and $v$ the probability that \emph{no} $1$-offset $uv$-trail of length $2$ existing is exactly 
\[
p'=(1-p^2)^{\binom{n-2}{k-1}}.
\]
If we assume that the choices for each pair of vertices are independent, then we should be able to model the threshold for $\mathscr{T}$-connectivity in $H_{n,p}^{(k)}$ with connectivity in $G_{n,1-p'}$, which we know to be connected when $1-p'=(\log{n} + \omega)/n$. So
\[
1-p'=1-(1-p^2)^{\binom{n-2}{k-1}}\approx p^2\frac{n^{k-1}}{(k-1)!}
\]
and that leaves us with probability $\sqrt{(k-1)!(\log{n})/n^k}$ as a target for the sharp threshold for $\mathscr{T}$-connectivity in $H_{n,p}^{(k)}$. This heuristic argument turns out to be accurate and $\sqrt{(k-1)!(\log{n})/n^k}$ is the sharp threshold for $\mathscr{T}$-connectivity.
\begin{theorem}
Let $k\ge 3$ be an integer and $\varepsilon >0$. Then:
\begin{enumerate}[(i)]
\item If $p \ge (1+\varepsilon) \sqrt{(k-1)!(\log{n})/n^k}$, then a.a.s. $H_{n,p}^{(k)}\in\mathscr{T}$.\label{thm:connect:a}
\item If $p \le (1-\varepsilon) \sqrt{(k-1)!(\log{n})/n^k}$, then a.a.s. $H_{n,p}^{(k)}\notin\mathscr{T}$.\label{thm:connect:b}
\end{enumerate}
\end{theorem}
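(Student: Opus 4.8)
The plan is to prove both directions via the first and second moment methods, mirroring the heuristic in which $\mathscr{T}$-connectivity is governed by the disappearance of the "last isolated pair," exactly as connectivity in $G_{n,q}$ with $q = 1-p'$ is governed by the last isolated vertex. I will not try to couple directly with $G_{n,1-p'}$ (the events for different pairs are not genuinely independent); instead I will show that the obstruction to $\mathscr{T}$-connectivity is, a.a.s., precisely the existence of a pair $u,v$ admitting no $1$-offset $uv$-trail of length $2$, and that such "bad pairs" appear exactly at the claimed threshold.

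For part \eqref{thm:connect:b} (the $0$-statement), fix the threshold $p = (1-\varepsilon)\sqrt{(k-1)!(\log n)/n^k}$. Let $Z$ count the ordered pairs $(u,v)$ with $u \ne v$ such that no triple $S$ of size $k-1$ disjoint from $\{u,v\}$ has both $\{u\}\cup S$ and $\{v\}\cup S$ as edges. Then $\E Z = n(n-1)(1-p^2)^{\binom{n-2}{k-1}}$, and since $p^2\binom{n-2}{k-1} = (1-\varepsilon)^2(1+o(1))\log n$, we get $\E Z = n^{2-(1-\varepsilon)^2+o(1)} \to \infty$. A standard second moment computation on $Z$ — the covariance between pairs $(u,v)$ and $(u',v')$ is negligible because the relevant edge sets overlap in only $O(n^{k-2})$ potential edges — gives $\Var Z = o((\E Z)^2)$, so a.a.s.\ $Z > 0$. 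It remains to argue that a bad pair of this kind really does obstruct $\mathscr{T}$: if $u,v$ admit no length-$2$ trail, one must rule out longer $1$-offset $uv$-trails. I would handle this by first conditioning on the (a.a.s.) event that there is no length-$2$ trail between \emph{some} pair, and then showing that for a \emph{random} such pair the absence of a length-$4$ (and in fact any even-length) trail holds a.a.s.\ as well; alternatively, and more cleanly, one can observe that the existence of \emph{any} $1$-offset $uv$-trail forces in particular two edges through $u$ sharing $k-1$ vertices and two edges through $v$ sharing $k-1$ vertices, whose common neighborhoods must then be linkable, and count directly that a.a.s.\ some pair fails this weaker necessary condition at this $p$. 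The cleanest route is to choose the bad pair so that, say, $u$ lies in no two edges sharing $k-1$ vertices at all; I will check that the threshold for "every vertex lies in two such edges" is of lower order, so this does not give the sharp constant, and therefore I must argue at the level of trails, not vertices — this is the delicate point.

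For part \eqref{thm:connect:a} (the $1$-statement), fix $p = (1+\varepsilon)\sqrt{(k-1)!(\log n)/n^k}$. Here $\E Z = n^{2-(1+\varepsilon)^2+o(1)} \to 0$, so a.a.s.\ every pair $u,v$ admits a $1$-offset $uv$-trail of length exactly $2$, which immediately gives $H_{n,p}^{(k)}\in\mathscr{T}$. This direction is therefore the easy one: a single application of Markov's inequality to $Z$. The only care needed is the asymptotics $p^2\binom{n-2}{k-1} = (1+o(1))\,p^2 n^{k-1}/(k-1)! = (1+\varepsilon)^2(1+o(1))\log n$ and the resulting bound $\E Z \le n^2 \exp(-(1+\varepsilon)^2(1+o(1))\log n) \to 0$.

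The main obstacle is the $0$-statement, and specifically justifying that the appearance of a pair with no length-$2$ trail actually prevents $\mathscr{T}$-connectivity, rather than merely being a local deficiency that longer trails repair. I expect to resolve this by strengthening the choice of obstruction: rather than counting pairs with no length-$2$ trail, I would count pairs $(u,v)$ such that there is \emph{no} pair of edges $e_1, e_2$ through $u$ with $|e_1 \cap e_2| = k-1$ whose shared $(k-1)$-set $S_u$ can be joined to a corresponding $(k-1)$-set $S_v$ at $v$ by an alternating path — and then show that a.a.s.\ the minimal such obstruction is the absence of length-$2$ trails at exactly the constant $\sqrt{(k-1)!(\log n)/n^k}$, by a union bound over trail lengths showing that for $\ell \ge 2$ the expected number of pairs with a length-$2\ell$ trail but no shorter one is dominated by the length-$2$ term. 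Once that domination is in hand, the second moment argument on $Z$ closes the proof.
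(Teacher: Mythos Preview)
Your proposal contains a basic arithmetic slip that derails both directions. You assert $p^2\binom{n-2}{k-1}=(1\pm\varepsilon)^2(1+o(1))\log n$, but $\binom{n-2}{k-1}\sim n^{k-1}/(k-1)!$, so in fact
\[
p^2\binom{n-2}{k-1}\sim(1\pm\varepsilon)^2\,\frac{\log n}{n}\longrightarrow 0.
\]
Hence $(1-p^2)^{\binom{n-2}{k-1}}\to 1$ and $\E Z\sim n^2$ in \emph{both} regimes. Your ``easy'' proof of part~(i) via $\E Z\to 0$ is therefore simply false: even well above the threshold, almost every pair $(u,v)$ admits no length-$2$ trail, and $\mathscr{T}$-connectivity must come from longer trails.

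The right granularity is vertices, not pairs. Observe that a vertex $v$ is an endpoint of \emph{some} $1$-offset trail if and only if it is an endpoint of one of length~$2$: by condition~(i) of the definition the first two edges of any $1$-offset trail starting at $v$ are $\{v\}\cup S$ and $\{x\}\cup S$ for some $(k-1)$-set $S$ and some $x\neq v$, and that pair already is a length-$2$ trail from $v$. The number of such potential length-$2$ trails from a fixed $v$ is $k\binom{n-1}{k}\sim n^k/(k-1)!$, one factor of $n$ larger than your pair count, and this is precisely what makes $p^2\cdot n^k/(k-1)!\sim(1\pm\varepsilon)^2\log n$ come out at the correct order. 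You brush against this when you consider ``$u$ lies in no two edges sharing $k-1$ vertices,'' but that is the wrong event (two edges both through $u$); the correct event is one edge through $u$ together with a second edge sharing its other $k-1$ vertices, and this does give the sharp constant. The paper's proof of (ii) is then a second-moment argument on the number $Y$ of such ``isolated'' vertices, using FKG to lower-bound $\Pr(Y_v=1)$ and Janson's inequality to upper-bound $\Pr(Y_v=Y_w=1)$.

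For part~(i), since most pairs lack a length-$2$ trail, no one-line Markov bound is available. The paper runs the standard connectivity-type argument instead: for every bipartition $(S,V\setminus S)$ with $1\le|S|\le n/2$, Janson's inequality shows that a.a.s.\ some length-$2$ trail crosses the cut, and a union bound over all $S$ (with the range of $|S|$ split into three regimes) finishes the proof.
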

\begin{proof}
First we show part~\eqref{thm:connect:a}. Let us divide $V$ into two sets, $S$ and $V\setminus S$ such that $\lvert S\rvert=s$ with $1\le s\le n/2$ and let $X_S$ be the number of $1$-offset trails of length $2$ connecting the two sets. Since the trails must start in one set and end in another there should be $\ell = s(n-s)\binom{n-2}{k-1}$ potential trails. Now suppose we enumerate each of the trails and let $X_i$ be the indicator variable that the $i$th trail is present. Clearly,  $X_S=X_1+X_2+\cdots+X_\ell$ and
\[
\mu=\E(X_S)=s(n-s)\binom{n-2}{k-1}p^2 = (1+o(1))(1+\varepsilon)^2s\frac{n-s}{n}\log n.
\]
Observe that some of these trails share edges with the others, so the event that $X_i=1$ and $X_j=1$  is not necessarily independent for all $i$ and $j$. So we write $i\sim j$ if $X_i$ and $X_j$ share an edge, then
\[
\Delta = \sum_{\substack{{\{i,j\}:i\sim j} \\ {i\ne j}}}\E(X_iX_j) = O\left( s(n-s)\binom{n-2}{k-1} n p^3\right) = O\left(s (\log n)^{3/2} / n^{k/2 - 1} \right).
\]
So by Janson's inequality (see, e.g., Corollary 21.13 in \cite{FK}) we have
\[
\Pr(X_S=0)\le e^{-\mu+\Delta}\le e^{-(1+o(1))(1+\varepsilon)^2s\frac{n-s}{n}\log n}
\]
and the union bound taken over all sets $S$ of size $1\le s \le n/2$ implies
\begin{align*}
\sum_{s=1}^{n/2} \binom{n}{s} \Pr(X_S=0)  &= \sum_{s=1}^{1/\varepsilon} \binom{n}{s} \Pr(X_S=0) \\
&\qquad +   \sum_{s=1+ 1/\varepsilon}^{n/\log n} \binom{n}{s} \Pr(X_S=0) + \sum_{s=1+n/\log n}^{n/2} \binom{n}{s} \Pr(X_S=0).
\end{align*}
If $1\le s \le 1/\varepsilon$, then $(n-s)/n \approx 1$ and 
\[
\sum_{s=1}^{1/\varepsilon} \binom{n}{s} \Pr(X_S=0) \le \sum_{s=1}^{1/\varepsilon} n^s e^{-(1+o(1))(1+\varepsilon)^2s\log n}
\le \sum_{s=1}^{1/\varepsilon} \frac{1}{n^{\varepsilon s}} \le \frac{1}{\varepsilon} \cdot \frac{1}{n^{\varepsilon}} = o(1). 
\]
If $1/\varepsilon \le s \le n/\log n$, then again $(n-s)/n  \approx 1$ and 
\[
\sum_{s=1+1/\varepsilon}^{n/\log n} \binom{n}{s} \Pr(X_S=0) \le \sum_{s=1+1/\varepsilon}^{n/\log n} n^s e^{-(1+o(1))(1+\varepsilon)^2s\log n} 
\le \sum_{s=1+1/\varepsilon}^{n/\log n} \frac{1}{n^{\varepsilon s}} 
\le \frac{n}{\log n} \cdot \frac{1}{n} = o(1).
\]
Finally, if $n/\log n \le s \le n/2$, then $(n-s)/n \ge 1/2$ and since $\binom{n}{s} \le \left( en/s\right)^s \le (e\log n)^s$, we get
\begin{align*}
\sum_{s=1+n/\log n}^{n/2} \binom{n}{s} \Pr(X_S=0) &\le \sum_{s=1+n/\log n}^{n/2} (e\log n)^s e^{-(1+o(1))(1+\varepsilon)^2s(\log n)/2}\\
&=\sum_{s=1+n/\log n}^{n/2} e^{s + s\log\log n-(1+o(1))(1+\varepsilon)^2s(\log n)/2} = o(1).
\end{align*}
This completes the proof of part~\eqref{thm:connect:a}. 

Now we prove part~\eqref{thm:connect:b}. Clearly it suffices to show that if $p = (1-\varepsilon) \sqrt{(k-1)!(\log{n})/n^k}$, then $H_{n,p}^{(k)}$ has a vertex which is not an endpoint of any $1$-offset trail of length $2$. For a fixed vertex $v$ let $X_v$ counts the number of $1$-offset trails of length $2$ with $v$ as an endpoint. Let $Y_v$ be an indicator random variable which equal to 1 if $X_v=0$. Let $Y=\sum_v Y_v$. Since there are $\binom{n-1}{k} \binom{k}{k-1}$ potential trails with $v$ as its endpoint, 
the FKG inequality (see, e.g., Theorem 21.5 in \cite{FK}) implies that 
\[
\Pr(Y = 1) = \Pr(X_v = 0) \ge (1-p^2)^{\binom{n-1}{k} \binom{k}{k-1}} 
\]
and so
\begin{equation}\label{eq:y}
\E(Y) \ge n (1-p^2)^{\binom{n-1}{k} \binom{k}{k-1}}  \approx ne^{-\binom{n-1}{k} \binom{k}{k-1}p^2} \to \infty.
\end{equation}

Let $v \neq w$. Now we will use Janson's inequality to estimate from above
\[
\Pr(Y_v = Y_w = 1) = \Pr(X_v = X_w = 0) = \Pr(X_v + X_w = 0).
\]
Let 
\[
X_v + X_w = \sum_{i} Z_i^v + \sum_{i} Z_i^w,
\] 
where $Z_i^v$ and $Z_i^w$ are potential 1-offset trails of length 2 with an endpoint $v$ or $w$, respectively. Clearly,
\[
\E(X_v + X_w) = 2\binom{n-1}{k} \binom{k}{k-1} p^2
\]
and
\[
\Delta = \sum_{\substack{{\{i,j\}:i\sim j} \\ {i\ne j}}}\E(Z_i^vZ_j^v) + \sum_{\substack{{\{i,j\}:i\sim j} \\ {i\ne j}}}\E(Z_i^wZ_j^w) + \sum_{\substack{{\{i,j\}:i\sim j} \\ {i\ne j}}}\E(Z_i^vZ_j^w).
\]
Now each of these three sums is bounded by $O(n^{k+1} p^3 + n^{k-2}p^2)$. The first term counts all pairs of 1-offset trials of length 2 with exactly one edge in common. The second term counts 1-offset trials of length 2 with $v$ and $w$ as its endpoints. (Observe that $Z_i^v$ and $Z_i^w$ can be associated with the same trail.) Consequently, $\Delta = o(1)$ and Janson's inequality yields that
\[
\Pr(X_v + X_w = 0) \le e^{-2\binom{n-1}{k} \binom{k}{k-1} p^2 +o(1)}
\]
so that 
\begin{align*}
\E(Y^2) &=  \E(Y) + \sum_{v\neq w} \Pr(Y_v = Y_w = 1)\\
&= \E(Y) + \sum_{v\neq w} \Pr(X_v + X_w = 0)
\le \E(Y) + n^2 e^{-2\binom{n-1}{k} \binom{k}{k-1} p^2 +o(1)}.
\end{align*}
Thus, due to~\eqref{eq:y}, we have $\frac{\E(Y^2)}{\E(Y)^2}  \approx 1$
and then Chebyshev's inequality (cf.~\eqref{eq:cheb}) implies that a.a.s. $Y>0$, as required.
\end{proof}

\section{Concluding remarks}\label{sec:conl}

In this paper we have studied the $\ell$-offset Hamiltonicity of random hypergraphs and why these structures are important in relation to the 1-2-3 Conjecture. The case when $\ell=1$ is not understood yet, but we conjecture that the asymptotic threshold for the existence of $1$-offset Hamilton cycle in $H_{n,p}^{(k)}$ is $\sqrt{(\log n)/n^k}$. In order to prove this, one can try to use a similar approach as in~\cite{F,DF}. This will require to show that a.a.s. $H_{n,p}^{(k)}$ has a factor of 1-offset trails of length~2, which is similar to a  celebrated result of Johansson, Khan and Vu~\cite{JKV} about factors in hypergraphs. 

One can also consider a directed version of Theorem~\ref{thm:main}. Let $\overrightarrow{H}_{n,p}^{(k)}$ be a \emph{directed random hypergraph}, where every ordered $k$-tuple appears independently with probability~$p$. An immediate consequence of the General Clutter Percolation Theorem of McDiarmid~\cite{MC} implies that all results in parts~\eqref{thm:b} and \eqref{thm:c} of Theorem~\ref{thm:main} also hold for $\overrightarrow{H}_{n,p}^{(k)}$. As a matter of fact an easy modification of the proof of Theorem~\ref{thm:main} yields more accurate results. In particular, one can show that $(e/n)^{k/2}$ is the sharp threshold for the existence of the directed $\ell$-offset Hamilton cycle for $k\ge 6$ and $\ell \ge 3$.


\begin{thebibliography}{9}

\bibitem{30enough}
L.~Addario-Berry, K.~Dalal, C.~McDiarmid, B.~A. Reed, and A.~Thomason,
  \emph{Vertex-colouring edge-weightings}, Combinatorica \textbf{27} (2007),
  no.~1, 1--12.

\bibitem{Gnp}
L.~Addario-Berry, K.~Dalal, and B.~A. Reed, \emph{Degree constrained
  subgraphs}, Discrete Appl. Math. \textbf{156} (2008), no.~7, 1168--1174.

\bibitem{AKS} M.~Ajtai, J.~Koml\'{o}s and E.~Szemer\'{e}di,
{\em The first occurrence of Hamilton cycles in random graphs},
Annals of Discrete Mathematics~\textbf{27} (1985), 173--178.

\bibitem{BDFH} P. Bennett, A. Dudek, A. Frieze and L. Helenius,
{\em Weak and strong versions of the 1-2-3 conjecture for uniform hypergraphs},
Electronic Journal of Combinatorics~\textbf{23} (2016), P2.46.

\bibitem{Boll} B.~Bollob\'{a}s,
{\em The evolution of sparse graphs},
in Graph Theory and Combinatorics, Academic Press, Proceedings of Cambridge Combinatorics,
Conference in Honour of Paul Erd\H{o}s (B.~Bollob\'{a}s; Ed) (1984), 35--57.

\bibitem{DF} A.~Dudek and A.~Frieze,
{\em Loose Hamilton cycles in random uniform hypergraphs},
Electronic Journal of Combinatorics~\textbf{18} (2011), P48.

\bibitem{DF2} A.~Dudek and A.~Frieze,
{\em Tight Hamilton cycles in random uniform hypergraphs},
Random Structures and Algorithms~\textbf{42} (2013), no. 3, 374--385. 

\bibitem{DFLS}
A.~Dudek, A.~Frieze, P.-S. Loh, and S.~Speiss,
{\em Optimal divisibility conditions for loose {H}amilton cycles in random hypergraphs},
Electronic Journal of Combinatorics~\textbf{19} (2012), P44.

\bibitem{Feb} A.~Ferber,
{\em Closing gaps in problems related to Hamilton cycles in random graphs and hypergraphs},
Electronic Journal of Combinatorics~\textbf{22} (2015), P1.61.

\bibitem{F} A.~Frieze,
{\em Loose Hamilton cycles in random 3-uniform hypergraphs},
Electronic Journal of Combinatorics~\textbf{17} (2010), N28.
  
\bibitem{FK} A.~Frieze and M. Karo\'nski,
{\em Introduction To Random Graphs},
Cambridge University Press, 2016.  
  
\bibitem{JKV} A.~Johansson, J.~Kahn and V.~Vu,
{\em Factors in random graphs},
Random Structures and Algorithms \textbf{33} (2008), 1--28.  
  
\bibitem{5enough}
M.~Kalkowski, M.~Karo\'nski, and F.~Pfender, \emph{Vertex-coloring
  edge-weightings: Towards the 1-2-3-conjecture}, J. Comb. Theory, Ser. B
  \textbf{100} (2010), no.~3, 347--349.  
  
\bibitem{KKP}
\bysame, \emph{The 1-2-3 conjecture for hypergraphs}, E-print: \texttt{arXiv:1308.0611},
  2013.

\bibitem{KLT} M. Karo\'nski, T. {\L}uczak, A. Thomason,
{\em Edge weights and vertex colors},
Journal of Combinatorial Theory, Ser. B \textbf{91} (2004), 151--157.

\bibitem{MC} C.~McDiamird,
{\em General percolation and random graphs},
Advances in Applied Probability~\textbf{13} (1981), 40--60.

\bibitem{KS} J.~Koml\'os and E.~Szemer\'edi,
{\em Limit distributions for the existence of Hamilton circuits in a random graph},
Discrete Mathematics~\textbf{43} (1983), 55--63.
     
\bibitem{PP} O.~Parczyk and Y.~Person,
{\em Spanning structures and universality in sparse hypergraphs},
Random Structures and Algorithms~\textbf{49} (2016), no. 4, 819--844. 
 
\bibitem{Poole}  D. Poole,
{\em  On weak hamiltonicity of a random hypergraph}, 
E-print:  \texttt{arXiv:1410.7446}, 2014.  
  
\bibitem{Seamone}
B.~Seamone, \emph{The 1-2-3 conjecture and related problems: a survey},
  E-print: \texttt{arXiv:1211.5122}, 2012.  
  
\bibitem{13enough}
T.~Wang and Q.~Yu, \emph{On vertex-coloring 13-edge-weighting}, Front. Math.
  China \textbf{3} (2008), no.~4, 581--587.  
  
\end{thebibliography}
\end{document}